\documentclass[11pt,twoside,a4paper]{amsart}
\usepackage{fp-pi1-macros}

\usepackage{amsfonts, amsthm, amssymb, amsmath, stmaryrd}
\usepackage{mathrsfs,array}
\usepackage{eucal,color,enumerate,accents}
\usepackage[all]{xy}
\usepackage{graphicx}
\usepackage{url}
\usepackage{comment} 
\usepackage[utf8]{inputenc}
\usepackage[T1]{fontenc} 

\usepackage{enumitem}
\usepackage[normalem]{ulem}
\usepackage{graphicx}
\usepackage{tikz-cd}
\usetikzlibrary{calc}
\usetikzlibrary{matrix,arrows,decorations.pathmorphing}

\usepackage{stmaryrd}
\usepackage{trimclip}

\usepackage{indentfirst} 

\makeatletter
\DeclareRobustCommand{\shortto}{%
  \mathrel{\mathpalette\short@to\relax}%
}

\newcommand{\short@to}[2]{%
  \mkern2mu
  \clipbox{{.5\width} 0 0 0}{$\m@th#1\vphantom{+}{\shortrightarrow}$}%
  }
\makeatother

\input xy
\xyoption{all}

\setlength{\textwidth}{6.3in}
\setlength{\textheight}{8.5in}
\hoffset=-0.5in
\voffset=-0.8in

\newtheorem{theorem}{Theorem}[section]

\newtheorem{corollary}[theorem]{Corollary}

\newtheorem{prop}[theorem]{Proposition}
\newtheorem*{theorem*}{Theorem}
\newtheorem*{lemma*}{Lemma}

\theoremstyle{definition}

\newtheorem{rmk}[theorem]{Remark}

\numberwithin{theorem}{section} 

\begin{document}


\vskip 0.5cm

\title[Fundamental groups of proper varieties are finitely presented]{Fundamental groups of proper varieties are \\ finitely presented}
\author{Marcin Lara}
\address{Institute of Mathematics, Faculty of Mathematics and Computer Science, Jagiellonian University, {\L}ojasiewicza 6, 30-348 Krak\'ow, Poland}
\email{marcin.lara@uj.edu.pl}

\author{Vasudevan Srinivas}
\address{TIFR\\ School of Mathematics\\ Homi Bhabha Road\\ Colaba\\ 400005 Mumbai\\ India}
\email{srinivas@math.tifr.res.in}

\author{Jakob Stix}
\address{Institut f\"ur Mathematik, 
Goethe--Universit\"at Frankfurt, Robert-Mayer-Stra\ss e {6--8},
60325~Frankfurt am Main, Germany}
\email{stix@math.uni-frankfurt.de}
\thanks{The authors acknowledge support by Deutsche Forschungsgemeinschaft  (DFG) through the Collaborative Research Centre TRR 326 "Geometry and Arithmetic of Uniformized Structures", project number 444845124. The second author (VS) was supported during part of the preparation of
the article by a J. C. Bose Fellowship of the Department of Science and Technology, India. He also
acknowledges support of the Department of Atomic Energy, India under project number RTI4001.
This material is partly based upon work supported by the NSF Grant No.\ DMS-1928930 while the third named author (JS) was in residence at MSRI in Berkeley (Spring 2023).}

\begin{abstract}
It was proven in  \cite{EsnShuSri}, that the \'etale fundamental group of a connected smooth projective variety over an algebraically closed field $k$ is topologically finitely presented. In this note, we extend this result to all connected proper schemes over $k$. 
\end{abstract}

\maketitle

\section{Introduction}
For a connected algebraic variety $X$ over an algebraically closed field $k$ of characteristic $0$, the \'etale fundamental group $\piet(X,\bx)$ of $X$ is a topologically finitely presented profinite group. This is proven by first reducing to the case of $k=\mathbb{C}$ and then applying the Riemann Existence Theorem \cite[Exp.~XII, Thm.~5.1]{SGA1} together with the fact that the topological fundamental group $\pi_1^{\rm top}(X(\mathbb{C}),x)$ is of finite presentation as a discrete group (see e.g.\ \cite{LojasiewiczTriangulations} or \cite{HironakaTriangulations}). 

In characteristic $p>0$, the picture is much more subtle due to the existence of Artin-Schreier covers of affine schemes, which makes $\piet(\Spec(A),\bx)$ typically not even topologically finitely generated. 
Remark~5.7 of \cite[Exp.~IX]{SGA1} raised doubts whether $\piet(X,\bx)$ is topologically finitely presented for proper varieties, even for proper smooth curves. 
In recent work, however, Shusterman (in the case of curves \cite{ShustermanBalanced}), and
Esnault, Shusterman and the second named author \cite{EsnShuSri} have shown that  for smooth projective varieties  $\piet(X,\bx)$ is still topologically finitely presented. Both results are based on a criterion for finite presentation of profinite groups due to Lubotzky \cite{Lubotzky}.

From now on, we will omit the base points.
\begin{theorem}[part of Thm.~1.1 of \cite{EsnShuSri}] 
\label{thm:ESS-main}
Let $X$ be a connected smooth  projective variety over an algebraically closed field $k$.
Then the \'etale fundamental group $\piet(X)$ is topologically finitely presented.
\end{theorem}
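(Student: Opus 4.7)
The plan is to apply Lubotzky's criterion \cite{Lubotzky}, which gives finite presentation of a topologically finitely generated profinite group $\Gamma$ as soon as one has a uniform bound of the form $\dim_{\mathbb{F}_\ell} H^2(\Gamma, M) \le C \cdot \log|M|$ for every prime $\ell$ and every finite simple continuous $\mathbb{F}_\ell[\Gamma]$-module $M$. Topological finite generation of $\piet(X)$ is classical (SGA~1), so the task reduces to producing this $H^2$-bound. Since $\log|M| = (\dim_{\mathbb{F}_\ell} M) \log \ell$, what is needed is essentially a bound $\dim H^2(\piet(X), M) \le C'(X) \cdot \dim_{\mathbb{F}_\ell} M$ uniformly in $\ell$ and $M$.

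Given $M$, I would choose a connected finite Galois étale cover $f \colon Y \to X$ with Galois group $H$ trivializing $M$, and apply the Hochschild--Serre spectral sequence for $1 \to \piet(Y) \to \piet(X) \to H \to 1$. The contributions of $H$ are controlled by representation-theoretic facts together with the hypothesis that $M$ is faithful on its image, so the problem reduces to bounding $\dim_{\mathbb{F}_\ell} H^i(Y, \mathbb{F}_\ell)$ for $i \le 2$ uniformly as $Y$ ranges over the covers arising this way (using that $\piet$-cohomology injects into étale cohomology in degrees $\le 2$).

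For $\ell$ coprime to $p := \mathrm{char}(k)$, the desired bound $\dim H^i(X, \mathcal{F}) \le C(X)\cdot \mathrm{rank}(\mathcal{F})$ for finite lisse $\mathbb{F}_\ell$-sheaves $\mathcal{F}$ comes from standard finiteness theorems: Poincaré duality handles the top degree, and the Grothendieck--Ogg--Shafarevich formula gives the Euler characteristic as a linear function of rank (Swan conductors vanish because $\mathcal{F}$ is unramified), so an induction on cohomological degree bounds each $H^i$ individually.

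The genuine difficulty, and the main obstacle I would focus on, is $\ell = p$, where no such bound holds for arbitrary $p$-torsion sheaves on account of Artin--Schreier covers of affine opens. Applying the Artin--Schreier sequence $0 \to \mathbb{F}_p \to \mathcal{O}_Y \xrightarrow{F-1} \mathcal{O}_Y \to 0$ yields
\[
\dim_{\mathbb{F}_p} H^i(Y, \mathbb{F}_p) \;\le\; \dim \ker\!\bigl(F{-}1 \mid H^i(Y,\mathcal{O}_Y)\bigr) + \dim \mathrm{coker}\!\bigl(F{-}1 \mid H^{i-1}(Y,\mathcal{O}_Y)\bigr),
\]
and the Hodge numbers $h^i(Y, \mathcal{O}_Y)$ really do grow in étale covers. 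However, only the slope-zero (unit-root) part of Frobenius on $H^i_{\mathrm{crys}}(Y/W(k))$ contributes to these kernels and cokernels, and one expects its dimension to grow at most linearly in $\log[Y:X]$, which is what the Lubotzky inequality tolerates. Setting up and proving this uniform slope-zero bound, propagated from $H^1$ (where it reflects the $p$-rank of the Picard scheme) to higher $H^i$ via Frobenius stability in crystalline cohomology, is the heart of \cite{EsnShuSri} and the step I expect to dominate the work.
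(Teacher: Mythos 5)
First, note that the paper offers no proof of this statement: it is quoted as an external input from \cite{EsnShuSri}, and the introduction only records that the proof there rests on Lubotzky's criterion \cite{Lubotzky}. So your sketch can only be measured against the cited source. Your opening is consistent with it: finite generation is classical, Lubotzky's criterion reduces the problem to a bound $\dim H^2(\piet(X),M)\le C\cdot\dim_{\mathbb{F}_\ell}M$ uniform in the prime $\ell$ and the finite simple module $M$ (note that your first formulation, with $C\cdot\log|M|=C\,(\dim M)\log\ell$, is weaker than what the criterion requires, but you correct this in the next sentence), and the injection $H^2(\piet(X),M)\hookrightarrow H^2_{\mathrm{et}}(X,\mathcal{F})$ for the associated lisse sheaf $\mathcal{F}$ is valid and is the right way in.

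The genuine gap is in the reduction that follows. Passing to the Galois cover $Y\to X$ with group $H$ trivializing $M$ and trying to bound $\dim H^i(Y,\mathbb{F}_\ell)$ for $i\le2$ uniformly over such $Y$ cannot succeed: $H$ embeds into $\mathrm{GL}(M)$, so all one knows is $[Y:X]\le\ell^{(\dim M)^2}$, while $\dim H^i(Y,\mathbb{F}_\ell)$ really does grow linearly in $[Y:X]$ --- for $\ell\ne p$ by multiplicativity of the Euler characteristic in finite \'etale covers, and for $\ell=p$ already for $i=1$ on curves, where $\dim H^1(Y,\mathbb{F}_p)$ is the $p$-rank of the Jacobian and the Deuring--Shafarevich formula makes it grow linearly in the degree of a $p$-tower. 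In particular your key expectation --- that the slope-zero part of crystalline cohomology grows like $\log[Y:X]$ --- is false, and even if it held, $\log[Y:X]$ is of order $(\dim M)^2\log\ell$, not the linear-in-$\dim M$ bound that Lubotzky's criterion demands. The proof in \cite{EsnShuSri} never passes to the trivializing cover: it bounds $\dim H^i_{\mathrm{et}}(X,\mathcal{F})$ for $i\le 2$ linearly in $\mathrm{rank}(\mathcal{F})$ on $X$ itself; for $\ell=p$ the heart is the translation of an $\mathbb{F}_p$-local system into a Frobenius-periodic, hence numerically flat, vector bundle $E$ via Artin--Schreier/Lange--Stuhler, together with a bound $\dim H^i(X,E)\le C(X)\cdot\mathrm{rank}(E)$ for such bundles. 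Your $\ell\ne p$ paragraph also does not close up in dimension $\ge2$: Grothendieck--Ogg--Shafarevich is a statement about curves, and an alternating-sum Euler-characteristic identity cannot bound $h^1$ or $h^2$ individually without upper bounds on all the remaining cohomology groups. Finally, you explicitly leave the decisive characteristic-$p$ estimate as a black box, so even with the reduction repaired this is a roadmap rather than a proof.
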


Our goal is to generalize Thm.~\ref{thm:ESS-main} to all connected schemes that are proper over $\Spec(k)$ (which is new only if $k$ has characteristic $p>0$). Such a generalization responds affirmatively to a question raised by Esnault.

\begin{theorem}
\label{thm:main}
Let $X$ be a connected scheme that is proper over $\Spec(k)$ for an algebraically closed field $k$. Then $\piet(X)$ is topologically finitely presented.
\end{theorem}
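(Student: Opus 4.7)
\emph{Plan.} The strategy is to reduce Theorem~\ref{thm:main} to Theorem~\ref{thm:ESS-main} by dévissage and Noetherian induction on $\dim X$. Since $\piet$ does not see the nilradical, one may assume $X$ reduced. The reduction from a reduced connected proper $X$ to its normalization $X^\nu$ (also connected and proper) I would handle by a van~Kampen style gluing of étale covers along the conductor subscheme, whose dimension is strictly less than $\dim X$ and whose $\piet$ is finitely presented by the induction hypothesis. So we may further assume $X$ is normal. Combining Chow's lemma with de~Jong's alterations then produces a smooth projective variety $\tilde X$ together with a proper surjective morphism $f\colon \tilde X\to X$; since $X$ is normal, taking the Stein factorization allows us to arrange that $f$ has geometrically connected fibers. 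By \cite[Exp.~X]{SGA1}, such an $f$ yields a surjection $f_*\colon \piet(\tilde X)\twoheadrightarrow \piet(X)$, and since Theorem~\ref{thm:ESS-main} provides finite presentation of $\piet(\tilde X)$, we obtain finite generation of $\piet(X)$ immediately.

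To upgrade finite generation to finite presentation, one must show that $\ker(f_*)$ is topologically finitely generated as a normal subgroup of $\piet(\tilde X)$ --- equivalently, by Lubotzky's cohomological criterion \cite{Lubotzky} used in \cite{EsnShuSri}, that there is a uniform linear bound in the index $[\piet(X):\piet(U)]$ on $\dim_{\mathbb{F}_\ell} H^2(U,\mathbb{F}_\ell)$ as $U\to X$ ranges over connected finite étale covers and $\ell$ over all primes. The Leray spectral sequence for the base-changed morphism $f_U\colon U\times_X\tilde X \to U$ compares $H^2(U,\mathbb{F}_\ell)$ with $H^2(U\times_X\tilde X,\mathbb{F}_\ell)$ --- bounded by the argument of \cite{EsnShuSri} applied to $\tilde X$ --- plus correction terms fed by $H^1$ of the geometric fibers of $f$ and by cohomology over the locus where $f$ is not smooth.

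\emph{The main obstacle} is controlling those correction terms uniformly in $U$. The fibers of $f$ over a dense smooth open $V\subseteq X$ form a bounded family of smooth projective varieties, so their $H^1$'s can be handled via Theorem~\ref{thm:ESS-main} applied fiberwise; but the non-generic fibers lie over a proper closed subscheme $Z\subsetneq X$ where, in characteristic $p>0$, $H^1$ with $\mathbb{F}_p$-coefficients can be wildly large (the Artin--Schreier phenomenon flagged already in \cite[Exp.~IX, Rem.~5.7]{SGA1}). The natural fix is to fold the contribution over $Z$ into the Noetherian induction --- $Z$ is proper over $k$ of strictly smaller dimension, so $\piet(Z)$ is finitely presented by the induction hypothesis --- and to glue via an étale van~Kampen / excision argument mirroring the normalization step. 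Making this van~Kampen argument rigorous for profinite étale fundamental groups of singular proper schemes in positive characteristic, including a clean control of the Leray spectral sequence across the stratification, is where I expect the real technical work to lie.
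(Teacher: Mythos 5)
There are two genuine gaps. First, the claim that ``taking the Stein factorization allows us to arrange that $f$ has geometrically connected fibers'' does not work: in characteristic $p$ de Jong only provides an \emph{alteration}, which is generically finite of degree $d$ possibly $>1$, so its Stein factorization is $\tilde X \to Y \to X$ with $Y \to X$ a nontrivial finite cover (the normalization of $X$ in $K(\tilde X)$), not $Y=X$. Hence $f_*\mathcal{O}_{\tilde X}\neq\mathcal{O}_X$ and you do not get a surjection $\piet(\tilde X)\twoheadrightarrow\piet(X)$; connected covers of $X$ need not pull back to connected covers of $\tilde X$. Finite generation of $\piet(X)$ is still true, but it requires the descent/van Kampen presentation along $f$ (SGA1, Exp.~IX, Cor.~5.2), in which $\piet(X)$ is generated by the images of the $\piet$'s of the components of $\tilde X$ \emph{together with} finitely many extra ``edge'' generators indexed by components of $\tilde X\times_X\tilde X$ --- not as a quotient of $\piet(\tilde X)$ alone. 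Second, and more seriously, the finite presentation step is not proved: you reduce it to a uniform bound on $H^2(U,\mathbb{F}_\ell)$ via Lubotzky's criterion and a Leray/stratification argument, and you yourself flag that the correction terms over the non-smooth locus --- $\mathbb{F}_p$-cohomology of non-proper strata and of singular fibers, i.e.\ exactly the Artin--Schreier phenomenon --- are uncontrolled. Feeding the closed stratum $Z$ into a Noetherian induction does not resolve this, because the correction terms involve the fibers of $f$ over $Z$ and the open complement $U\times_X(X\setminus Z)$, not $\piet(Z)$ itself. As written, the core of the theorem remains open.

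The paper avoids all of this with a purely group-theoretic bootstrap and no cohomology. Since $f:X'\to X$ (normalization composed with de Jong's alteration, $X'$ smooth projective) is proper surjective, it is of effective descent for $\sfFEt$ (SGA1, Exp.~IX, Thm.~4.12), which yields a van Kampen presentation with the following consequence (SGA1, Exp.~IX, Cor.~5.2 and 5.3): $\piet(X)$ is finitely generated if the components of $X'$ and $X''=X'\times_X X'$ are finite in number and the $\piet$'s of the components of $X'$ are finitely generated; and $\piet(X)$ is finitely \emph{presented} if moreover the $\piet$'s of the components of $X'$ are finitely presented and those of the components of $X''$ are finitely \emph{generated}. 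Theorem~\ref{thm:ESS-main} handles $X'$. The components of $X''$ are again connected proper $k$-schemes, so their finite generation follows from the first half of the statement applied to alterations of \emph{those} components. This two-step use of the same descent proposition replaces your entire Leray/stratification program; no induction on dimension, no conductor gluing, and no bound on $H^2$ beyond what is already inside the proof of Theorem~\ref{thm:ESS-main} for smooth projective varieties is needed.
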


To prove the theorem, we use descent along an 
alteration map to $X$ and the van Kampen presentation of $\piet(X)$ arising in this way.
More precisely, we use this trick twice.

\section{The proof}
For a scheme $T$, let $\sfFEt_T$ denote the category of finite \'etale covers of $T$. This gives rise to a category fibred over schemes. Recall that a morphism $g:T' \to T$ of schemes is said to be of effective descent for $\sfFEt$, if $g^*$ induces an equivalence of categories between $\sfFEt_T$ and the category of descent data in $\sfFEt$ along $g$.

\begin{prop}[Exp.~IX, Thm.~4.12 of \cite{SGA1}]\label{prop:proper-effective-descent}
    Let $f: X ' \to X$ be a proper surjective morphism of finite presentation. Then $f$ is of effective descent for $\sfFEt$.
\end{prop}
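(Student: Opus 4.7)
The plan is to verify the two defining conditions for $f$ to be of effective descent for $\sfFEt$: (a) the pullback functor $f^\ast\colon \sfFEt_X \to \sfFEt_{X'}$ is fully faithful, and (b) every descent datum along $f$ is effective. Since $f$ is of finite presentation, standard Noetherian approximation reduces us to the case where $X$ and $X'$ are noetherian.

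For (a), I would exploit the graph description of morphisms in $\sfFEt_X$: a morphism between finite \'etale covers $Y, Z \to X$ corresponds to a clopen subscheme $\Gamma \subseteq Y \times_X Z$ projecting isomorphically onto $Y$. A proper surjective morphism is universally submersive, so a subset of $Y$ is open (respectively closed) if and only if its preimage in $Y \times_X X'$ is. Combined with the observation that $f$, being surjective, is an epimorphism in schemes, this yields full faithfulness of $f^\ast$.

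For (b), the substantive step: given a descent datum $(Y', \varphi)$, with $Y' \to X'$ finite \'etale and $\varphi$ a descent isomorphism on $X' \times_X X'$, I must produce a cover $Y \to X$ descending it. Since $\sfFEt$ is a stack for the \'etale topology on $X$, the problem is \'etale-local on $X$, so I would base-change to $\Spec(\mathcal{O}^{\mathrm{sh}}_{X,\bar{x}})$ at a geometric point $\bar{x}$. Over such a strictly henselian local base, every finite \'etale cover is a trivial disjoint union of copies of the base, so constructing $Y$ reduces to identifying a finite set $F$ together with the correct maps. Proper base change provides a bijection $\pi_0(Y') \simeq \pi_0(Y'_{\bar{x}})$, and $\varphi$ restricts to a descent datum for the proper surjective morphism $X'_{\bar{x}} \to \bar{x}$ of schemes over a point; the orbits of the induced equivalence relation on $\pi_0(Y'_{\bar{x}})$ yield the sought set $F$, from which one builds $Y$ and checks that it descends $(Y', \varphi)$.

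The principal obstacle is carrying out (b) rigorously: transporting the descent cocycle cleanly through proper base change onto the special fibre, and verifying that the \'etale-locally constructed covers glue to a genuine finite \'etale cover of $X$ whose pullback recovers $(Y', \varphi)$ canonically rather than merely up to an unspecified isomorphism. A secondary point to watch is compatibility between the finite presentation reduction and the strict henselization, which requires a further limit argument expressing $\mathcal{O}^{\mathrm{sh}}_{X,\bar{x}}$ as a filtered colimit of \'etale neighborhoods.
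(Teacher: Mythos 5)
First, a point of order: the paper does not prove this proposition at all --- it is quoted verbatim from SGA\,1, Exp.~IX, Thm.~4.12 and used as a black box. So your proposal can only be measured against the classical argument (and its modern reworkings), not against anything in the text.

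Your architecture --- noetherian approximation, full faithfulness via universal submersiveness and the graph/clopen description of morphisms, effectivity checked after passage to a strictly henselian local base and then reduction to the closed fibre --- is essentially the standard route. But the final step has a genuine gap. Over $S=\Spec(\mathcal{O}^{\mathrm{sh}}_{X,\bar x})$ any descended object is necessarily a constant cover $F\times S$, so effectivity amounts to proving that $Y'_{\bar x}\to X'_{\bar x}$ is a \emph{completely split} cover, i.e.\ a disjoint union of copies of connected components of $X'_{\bar x}$. Your recipe --- take orbits of the equivalence relation induced by $\varphi$ on $\pi_0(Y'_{\bar x})$ --- silently assumes this: a connected finite \'etale cover of degree $\ge 2$ of a component of $X'_{\bar x}$ would contribute a single orbit, and the resulting $F\times S$ would not pull back to $(Y',\varphi)$. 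Splitness is exactly the content to be proved, and the standard mechanism is different: restrict the cocycle along $(\mathrm{id},z_0)\colon X'_{\bar x}\to X'_{\bar x}\times_{\bar x}X'_{\bar x}$ for a closed point $z_0$ (whose residue field is purely inseparable over the separably closed residue field at $\bar x$, so the fibre $Y'_{z_0}$ is a finite set $F$); this yields directly an isomorphism $Y'_{\bar x}\cong F\times X'_{\bar x}$ compatible with $\varphi$, and no equivalence relation is needed. Separately, ``proper base change provides a bijection $\pi_0(Y')\simeq\pi_0(Y'_{\bar x})$'' undersells what is required: to transport the entire descent datum, including the cocycle over the triple product, to the closed fibre you need the full equivalence $\sfFEt_{Z}\simeq\sfFEt_{Z_{\bar x}}$ for $Z$ equal to each of $X'_S$, $X'_S\times_S X'_S$ and $X'_S\times_S X'_S\times_S X'_S$, all proper over the henselian local $S$. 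That equivalence (Grothendieck existence over the completion plus descent along $A\to\widehat{A}$, or Artin approximation) is the deep input and is where most of the work in SGA\,1, Exp.~IX, \S4 actually lies; as written, your sketch treats it as a $\pi_0$-statement about $Y'$ alone.
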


Morphisms of effective descent $f : X' \to X$ for $\sfFEt$ give rise to a van Kampen-like presentation of $\piet(X)$ as the profinite completion of a quotient of the free topological product of the \'etale fundamental groups of the connected components of $X'$ and the usual topological fundamental group of a suitably defined ``dual graph''. 
This goes back to \cite[Exp.~IX, \S~5]{SGA1} and has been worked out in detail in \cite[Cor.~5.3]{Stix-vK}.

The existence of such a presentation allows one to ``descend'' finite generation/presentation of the fundamental groups involved, as made precise in the following proposition. Every statement below is about \emph{topological} finite generation/presentation.
\begin{prop}[Exp.~IX, Cor.~5.2 + Cor.~5.3 of \cite{SGA1}]\label{prop:fin-gen-pres-descent}
  Let $f: X' \to X$ be a morphism of effective descent for $\sfFEt$. We denote $X' \times_X X'$ by $X''$ and $X' \times_X X' \times_X X'$ by $X'''$.
  \begin{enumerate}[label=(\alph*)]
  \item \label{prop:pt:fin-gen-descent} Assume that $X', X''$ have finite $\pi_0$'s and that  $\piet$'s of the connected components of $X'$ are finitely generated. Then $\piet(X)$ is finitely generated.
  \item \label{prop:pt:fin-pres-descent} Assume that $X', X'', X'''$ have finite $\pi_0$'s, that  $\piet$'s of the connected components of $X'$ are  of finite presentation and that  $\piet$'s of the connected components of $X''$ are finitely generated. Then $\piet(X)$ is  of finite presentation. 
  \end{enumerate}
\end{prop}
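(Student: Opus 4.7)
The plan is to invoke the van Kampen-type presentation of $\piet(X)$ attached to the morphism of effective descent $f$, as worked out in \cite[Cor.~5.3]{Stix-vK} (cf.\ \cite[Exp.~IX, \S~5]{SGA1}). That construction identifies $\piet(X)$ with the profinite completion of an abstract group $\Pi$ obtained from the Čech nerve of $f$ as follows: its generators come, on the one hand, from the étale fundamental groups of the components of $X'$ (one copy of generators per vertex, with $\pi_0(X')$ indexing the vertices) and, on the other hand, from the edges of the underlying graph whose edge set is $\pi_0(X'')$ (one generator per edge lying outside a chosen spanning tree). Its relations come from three sources: the relations in each vertex group; amalgamation relations at each edge expressing that the two maps to vertex groups induced by the projections $X'' \rightrightarrows X'$ are intertwined by the corresponding edge generator; and finally coherence relations (one per component of $X'''$) coming from the three projections $X''' \to X''$.

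Given this presentation, both parts become straightforward counts. For~(a), the hypotheses that $\pi_0(X')$ and $\pi_0(X'')$ are finite, together with finite generation of each vertex group, immediately yield a finite topological generating set for $\Pi$ (finitely many vertex-group generators, plus finitely many edge generators). Since profinite completion of a finitely generated abstract group is topologically finitely generated, $\piet(X)$ is finitely generated.

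For~(b), I would extend to a finite presentation. The relations of the first kind are finite in number because there are finitely many finitely presented vertex groups. The amalgamation relations are finite because each edge group is finitely generated (so each edge contributes only finitely many relations, one per generator), and there are finitely many edges. The coherence relations from $X'''$ are finite in number because $\pi_0(X''')$ is finite and each component contributes a single relation. A finite abstract presentation of $\Pi$ descends to a topological finite presentation of its profinite completion $\piet(X)$, which is a standard fact.

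The main difficulty is really just to quote and unpack the van Kampen presentation from \cite[Cor.~5.3]{Stix-vK} correctly; once this is in place, the generator-and-relation count is mechanical. The nontrivial point to trust in that citation is that the $2$-truncation of the Čech nerve already suffices---i.e., that no further relations beyond those attached to components of $X'''$ are needed---which is precisely what the cited result guarantees.
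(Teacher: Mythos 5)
Your proposal is correct and takes essentially the same route as the paper, which establishes this proposition purely by citation to \cite[Exp.~IX, Cor.~5.2 and 5.3]{SGA1} together with the van Kampen presentation worked out in \cite[Cor.~5.3]{Stix-vK}; your unpacking of that presentation (vertices from $\pi_0(X')$, edges from $\pi_0(X'')$, cocycle relations from $\pi_0(X''')$) followed by the generator-and-relation count is exactly the intended argument. The one step you use implicitly and correctly is that the amalgamation relation at an edge need only be imposed on a topological generating set of the corresponding $\piet$ of a component of $X''$, since the two induced continuous homomorphisms into the quotient then agree on a dense subgroup and hence everywhere.
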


Let us now recall a result of de Jong specialized to our setting.

\begin{prop}[see Thm.~4.1 of \cite{deJong-alterations}]\label{prop:alterations}
    Let $X$ be a scheme 
    that is 
    proper over $\Spec(k)$ for an algebraically closed field $k$. Then there exists a proper surjective morphism 
    (of finite presentation) 
    $f: X' \to X$ from a smooth projective variety $X'$ over $\Spec(k)$.
\end{prop}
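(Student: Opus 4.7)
The plan is to reduce the statement to de Jong's theorem for integral proper schemes, applied to each irreducible component of $X$.

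First, since $X$ is proper over $k$ it is of finite type over $k$, hence Noetherian, and so has only finitely many irreducible components. The canonical closed immersion $X_{\rm red} \hookrightarrow X$ is a proper surjective morphism of finite presentation, so it suffices to produce the desired cover of $X_{\rm red}$. We may therefore assume $X$ is reduced. Write $X = X_1 \cup \cdots \cup X_n$ where the $X_i$ are the irreducible components equipped with their reduced induced subscheme structures; then each $X_i$ is an integral proper $k$-variety.

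Second, to each $X_i$ apply de Jong's theorem \cite[Thm.~4.1]{deJong-alterations} to obtain a proper, surjective, generically finite morphism $Y_i \to X_i$ with $Y_i$ a smooth projective $k$-variety. (Any residual finite purely inseparable base extension produced in de Jong's construction is trivial here because $k$ is algebraically closed, and in particular perfect.) Set $X' := \bigsqcup_{i=1}^n Y_i$ and let $f : X' \to X$ be the morphism whose restriction to $Y_i$ is the composition $Y_i \to X_i \hookrightarrow X$. Then $X'$ is smooth (each $Y_i$ is smooth) and projective over $k$ as a finite disjoint union of projective $k$-schemes (glue ample line bundles from the components, or embed each $Y_i$ into a common projective space using disjoint linear subspaces). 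The morphism $f$ is proper, and its set-theoretic image contains every $X_i$ and hence all of $X$, so $f$ is surjective. Finite presentation is automatic in this Noetherian setting.

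The only real obstacle is to invoke the correct form of de Jong's theorem, namely that for an integral proper $k$-scheme one can find an alteration to a smooth scheme that is \emph{projective} (not merely proper) over $k$; this is precisely \cite[Thm.~4.1]{deJong-alterations}. The phrasing ``smooth projective variety $X'$'' in the proposition is to be understood loosely, allowing for a finite disjoint union of smooth projective varieties. This is harmless for the subsequent use in the paper, since Proposition \ref{prop:fin-gen-pres-descent} only requires $X'$ to have finitely many connected components with the appropriate finiteness properties for their \'etale fundamental groups.
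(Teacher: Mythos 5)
Your proposal is correct and follows essentially the same strategy as the paper: reduce to the integral case, apply de Jong's Theorem 4.1 to get a regular projective alteration, note that regular of finite type over an algebraically closed field means smooth, and observe that a proper dominant map onto an irreducible target is surjective. The only (immaterial) difference is that the paper reduces to the integral case by passing to the connected components of the normalization $X^\nu \to X$, whereas you pass to the irreducible components of $X_{\rm red}$; both yield a finite disjoint union of integral proper $k$-schemes, and in both cases the resulting $X'$ is a finite disjoint union of smooth projective varieties, which is all that the application to Proposition~\ref{prop:fin-gen-pres-descent} requires.
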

\begin{proof}
  Let $\nu : X^\nu \to X$ be the normalization of $X$. The map $\nu$ is finite, and thus the scheme $X^\nu$ is still proper over $\Spec(k)$.

  Let then $f_1 : X' \to X^\nu$ be the alteration map of \cite[Thm.~4.1]{deJong-alterations} applied to each connected component of $X^\nu$. The map $f_1$  is proper, dominant, and thus surjective. Moreover,  \emph{loc.\ cit.\ }guarantees that $X'$ is regular and projective (and not merely proper!) over $\Spec(k)$. Now, as $k$ is algebraically closed, $X'$ is smooth over $\Spec(k)$. The composition $f = \nu \circ f_1 : X' \to X$ has all the requested properties.  
\end{proof}

We are now ready to finish the proof of the main result.
\begin{proof}[Proof of Thm.~\ref{thm:main}]
    Take $f : X' \to X$ as in Prop.~\ref{prop:alterations}. By 
    Thm.~\ref{thm:ESS-main} and Prop.~\ref{prop:proper-effective-descent} the map $f$ satisfies the assumptions for Prop.~\ref{prop:fin-gen-pres-descent}\ref{prop:pt:fin-gen-descent}. This shows that $\piet(X)$ is finitely generated, for any scheme $X$ that is proper over $\Spec(k)$. In fact, finite generation was already proven in \cite[Exp.~X, Thm.~2.9]{SGA1}, and we included the argument here for the convenience of the reader. 
    
    We are going to apply finite generation to the connected components of $X'' = X' \times_X X'$, which are connected proper schemes over $\Spec(k)$. Indeed, using Thm.~\ref{thm:ESS-main} (this time crucially!) and Prop.~\ref{prop:proper-effective-descent} again,     the map $f$ now satisfies the assumptions of Prop.~\ref{prop:fin-gen-pres-descent}\ref{prop:pt:fin-pres-descent}. This shows that $\piet(X)$ is finitely presented. 
\end{proof}

\section{More general base fields}

  Similarly to \cite[\S5]{EsnShuSri}, our main result extends to more arithmetic settings. We thank Peter Haine for  essentially suggesting the following corollary.

\begin{corollary}
Let $X$ be a connected scheme that is proper over $\Spec(k)$ for a field $k$. Then  $\piet(X)$ is finitely presented if and only if the absolute Galois group $\Gal_k$ is finitely presented.
\end{corollary}
\begin{proof}
We may assume $X$ is reduced and thus $k' = \mathrm{H}^0(X,\calO_X)$ is a finite field extension of $k$. Let $\bar k$ be an algebraic closure of $k$ containing $k'$. Then $X \to \Spec(k')$ being the Stein factorization of $X \to \Spec(k)$ implies that $\bar X = X \times_{k'} \bar k$ is connected. By Thm.~\ref{thm:main}, the group $\pi_1(\bar X)$ is finitely presented. The fundamental exact sequence \cite[Exp.~IX, Thm.~6.1]{SGA1} 
\[
1 \to \pi_1(\bar X) \to \pi_1(X) \to \Gal_{k'} \to 1
\]
shows that $\pi_1(X)$ is finitely presented if and only if $\Gal_{k'}$ is finitely presented. The latter is equivalent to $\Gal_k$ being finitely presented, see for example \cite[Prop.~2.3]{EsnSriSti}.
\end{proof}

\begin{rmk}
  Examples of fields $k$ with finitely presented $\Gal_k$ include: fields algebraic over a finite field, local $p$-adic fields, $\mathbb{R}$ and more generally real closed fields, $K((t))$ for a field of characteristic $0$ with $\Gal_K$ of finite presentation, and by \cite[Thm.~5.1]{Jarden} for a hilbertian field $k$, probabilistically almost always (for the Haar measure on $\Gal_k$) the fixed field $k^\Sigma$ in the separable closure $\bar k$ of a finite subset $\Sigma \subset \Gal_k$.
\end{rmk}
\bibliographystyle{alphaSGA}
\bibliography{fp-pi1-biblio.bib}

\end{document}